\begin{document}

\title*{A symmetric N\"{o}rlund sum\\
with application to inequalities}

\author{Artur M. C. Brito da Cruz, Nat\'{a}lia Martins and Delfim F. M. Torres}

\authorrunning{A. M. C. Brito da Cruz, N. Martins and D. F. M. Torres}

\institute{Artur M. C. Brito da Cruz$^{1, 2}$\\
\email{artur.cruz@estsetubal.ips.pt}\\[0.2cm]
Nat\'{a}lia Martins$^{2}$\\
\email{natalia@ua.pt}\\[0.2cm]
Delfim F. M. Torres$^{2}$\\
\email{delfim@ua.pt}\\[0.3cm]
$^1$Escola Superior de Tecnologia de Set\'{u}bal, Estefanilha, 2910-761 Set\'{u}bal, Portugal\\[0.2cm]
$^2$Center for Research and Development in Mathematics and Applications\\
Department of Mathematics, University of Aveiro, 3810-193 Aveiro, Portugal}

\maketitle


\vspace*{-3cm}

{\small [Submitted 18-Oct-2011;
accepted 09-March-2012; to Proceedings of International
Conference on Differential \& Difference Equations and Applications,
in honour of Professor Ravi P. Agarwal, to be published by Springer
in the series \emph{Proceedings in Mathematics} (PROM)]}

\bigskip
\bigskip
\bigskip
\bigskip


\abstract{Properties of an
$\alpha,\beta$-symmetric N\"{o}rlund sum are studied.
Inspired in the work by Agarwal et al.,
$\alpha,\beta$-symmetric quantum versions of H\"{o}lder,
Cauchy--Schwarz and Minkowski inequalities are obtained.}


\section{Introduction}

The symmetric derivative of function $f$ at point $x$
is defined as $\lim_{h \rightarrow 0} (f(x+h)-f(x-h))/(2h)$.
The notion of symmetrically differentiable is interesting
because if a function is differentiable at a point then
it is also symmetrically differentiable, but the converse
is not true. The best known example of this fact
is the absolute value function: $f(x) = |x|$ is not
differentiable at $x = 0$ but is symmetrically differentiable
at $x = 0$ with symmetric derivative zero \cite{Thomsom}.

Quantum calculus is, roughly speaking, the equivalent to traditional
infinitesimal calculus but without limits \cite{Kac}. Therefore,
one can introduce the symmetric quantum derivative of $f$ at $x$
by $(f(x+h)-f(x-h))/(2h)$. As in any calculus, it is then natural
to develop a corresponding integration theory, looking to such integral
as the inverse operator of the derivative.

The main goal of this paper is to study
the properties of a general symmetric quantum integral
that we call, due to the so-called N\"{o}rlund sum \cite{Kac},
the $\alpha,\beta$-symmetric N\"{o}rlund sum.

The paper is organized as follows. In Section~\ref{sec:2}
we define the forward and backward N\"{o}rlund sums.
Then, in Section~\ref{sec:sns}, we introduce the
$\alpha,\beta$-symmetric N\"{o}rlund sum and give some
of its properties. We end with Section~\ref{sec:ineq},
proving $\alpha,\beta$-symmetric versions of H\"{o}lder's,
Cauchy--Schwarz's and Minkowski's inequalities.


\section{Forward and backward N\"{o}rlund sums}
\label{sec:2}

This section is dedicated to the inverse operators of
the $\alpha$-forward and $\beta$-backward differences,
$\alpha > 0$, $\beta > 0$, defined respectively by
\[
\Delta_{\alpha}\left[  f\right]  \left(  t\right)  :=\frac{f\left(
t+\alpha\right)  -f\left(  t\right)  }{\alpha}\, , \quad
\nabla_{\beta}\left[  f\right]  \left(  t\right)  :=\frac{f\left(  t\right)
-f\left(t-\beta\right)}{\beta}.
\]

\begin{definition}
\label{def:o1}
Let $I \subseteq \mathbb{R}$ be such that
$a,b\in I$ with $a<b$ and $\sup I=+\infty$.
For $f:I\rightarrow\mathbb{R}$ and $\alpha >0$ we define the N\"{o}rlund sum
(the $\alpha$-forward integral) of $f$ from $a$ to $b$ by
\[
\int_{a}^{b}f\left(  t\right)  \Delta_{\alpha}t=\int_{a}^{+\infty}f\left(
t\right)  \Delta_{\alpha}t-\int_{b}^{+\infty}f\left(  t\right)
\Delta_{\alpha}t,
\]
where
$\displaystyle \int_{x}^{+\infty}f\left(  t\right)  \Delta_{\alpha}t=\alpha\sum
_{k=0}^{+\infty}f\left(  x+k\alpha\right)$,
provided the series converges at $x=a$ and $x=b$. In that case, $f$ is
said to be $\alpha$-forward integrable on $\left[  a,b\right]$. We say that $f$
is $\alpha$-forward integrable over $I$ if it is $\alpha$-forward integrable
for all $a,b\in I$.
\end{definition}

Until Definition~\ref{beta} (the backward/nabla case),
we assume that $I$ is an interval of $\mathbb{R}$
such that $\sup I=+\infty$. Note that if
$f:I\rightarrow\mathbb{R}$ is a function
such that $\sup I<+\infty$, then we can extend function
$f$ to $\tilde{f}:\tilde{I}\rightarrow\mathbb{R}$,
where $\tilde{I}$ is an interval with $\sup\tilde{I}=+\infty$,
in the following way: $\tilde{f}|_{I}=f$ and
$\tilde{f}|_{\tilde{I}\backslash I}=0$.

Using the techniques of Aldwoah in his Ph.D. thesis \cite{Aldwoah},
it can be proved that the $\alpha$-forward integral has the following properties:

\begin{theorem}
If $f,g: I \rightarrow\mathbb{R}$ are
$\alpha$-forward integrable on $[a,b]$,
$c\in\left[a,b\right]$, $k\in\mathbb{R}$, then
\begin{enumerate}
\item $\displaystyle\int_{a}^{a}f\left(  t\right)\Delta_{\alpha}t=0$;

\item $\displaystyle\int_{a}^{b}f\left(  t\right)\Delta_{\alpha}t
=\int_{a}^{c}f\left(  t\right)  \Delta_{\alpha}t+\int_{c}^{b}f\left(  t\right)
\Delta_{\alpha}t$, when the integrals exist;

\item $\displaystyle\int_{a}^{b}f\left(  t\right)\Delta_{\alpha}t
=-\int_{b}^{a}f\left(  t\right)  \Delta_{\alpha}t$;

\item $kf$ is $\alpha$-forward integrable on $\left[a,b\right]$ and
$\displaystyle \int_{a}^{b}kf\left(t\right)\Delta_{\alpha}t
=k\int_{a}^{b}f\left(t\right)  \Delta_{\alpha}t$;

\item $f+g$ is $\alpha$-forward integrable on $\left[a,b\right]$ and
\[
\int_{a}^{b}\left(  f+g\right)  \left(  t\right)  \Delta_{\alpha}t=\int
_{a}^{b}f\left(  t\right)  \Delta_{\alpha}t+\int_{a}^{b}g\left(  t\right)
\Delta_{\alpha}t\text{;}
\]

\item if $f\equiv0$, then $\displaystyle\int_{a}^{b}f\left(  t\right)
\Delta_{\alpha}t=0$.
\end{enumerate}
\end{theorem}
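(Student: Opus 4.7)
The plan is to reduce every one of the six claims to a direct manipulation of the series
$\int_x^{+\infty}f(t)\Delta_\alpha t=\alpha\sum_{k=0}^{+\infty}f(x+k\alpha)$
appearing in Definition~\ref{def:o1}, exploiting only the linearity and convergence properties of real series. Throughout, the hypothesis that $f$ is $\alpha$-forward integrable on $[a,b]$ provides that both $\int_a^{+\infty}f(t)\Delta_\alpha t$ and $\int_b^{+\infty}f(t)\Delta_\alpha t$ converge, which is what legitimizes each algebraic step.

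Properties 1, 3, 4, 5 and 6 are essentially automatic. For 1 and 3, the defining formula $\int_a^b=\int_a^{+\infty}-\int_b^{+\infty}$ gives $\int_a^a f(t)\Delta_\alpha t=0$ and $\int_a^b f(t)\Delta_\alpha t=-\int_b^a f(t)\Delta_\alpha t$ by inspection. For 4 and 5, the linearity of convergent series applied termwise to $\alpha\sum_{k=0}^{+\infty}[kf+g](x+k\alpha)$ at $x=a$ and $x=b$ establishes both the $\alpha$-forward integrability of $kf$ and of $f+g$ on $[a,b]$ (scalar multiples and sums of convergent series converge) \emph{and} the desired identities, after subtracting the two resulting equalities for the endpoints. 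For 6, every term of the defining series vanishes.

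The only claim requiring a telescoping argument is property 2. I would write
\begin{align*}
\int_a^c f(t)\Delta_\alpha t+\int_c^b f(t)\Delta_\alpha t
&=\Bigl(\int_a^{+\infty}f(t)\Delta_\alpha t-\int_c^{+\infty}f(t)\Delta_\alpha t\Bigr)\\
&\quad+\Bigl(\int_c^{+\infty}f(t)\Delta_\alpha t-\int_b^{+\infty}f(t)\Delta_\alpha t\Bigr),
\end{align*}
whereupon the $\int_c^{+\infty}$ term cancels and one recovers $\int_a^b f(t)\Delta_\alpha t$. The only subtle point — and the sole reason property 2 is stated conditionally — is that the intermediate point $c\in[a,b]$ need not lie in the arithmetic progression $\{a+n\alpha\}_{n\in\mathbb{N}_0}$, so $\alpha\sum_{k=0}^{+\infty}f(c+k\alpha)$ is a genuinely new series rather than a tail of the one based at $a$; the caveat ``when the integrals exist'' is precisely the assumption that this third series converges, so that the manipulation above operates on three finite numbers. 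With that convergence granted, the proof is pure bookkeeping.
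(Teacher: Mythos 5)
Your argument is correct. Note, however, that the paper gives no proof of this theorem at all: it merely asserts that the properties ``can be proved'' using the techniques of Aldwoah's thesis, so there is nothing to compare your proof against. Your reduction of everything to the linearity and cancellation of the convergent series $\alpha\sum_{k=0}^{+\infty}f(x+k\alpha)$ at $x=a$ and $x=b$ is exactly the routine verification the authors had in mind, and you correctly isolate the one non-trivial point: in property 2 the intermediate point $c$ need not belong to $\{a+n\alpha : n\in\mathbb{N}_0\}$, so convergence of the series based at $c$ is a genuinely additional hypothesis, which is what the clause ``when the integrals exist'' supplies.
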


\begin{theorem}
Let $f: I \rightarrow\mathbb{R}$ be $\alpha$-forward integrable
on $\left[a,b\right]$.
If $g:I\rightarrow\mathbb{R}$ is a nonnegative
$\alpha$-forward integrable function on $\left[a,b\right]$,
then $fg$ is $\alpha$-forward integrable on $\left[a,b\right]$.
\end{theorem}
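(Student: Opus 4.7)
The plan is to reduce the claim directly to convergence of series, since by Definition~\ref{def:o1} the statement ``$fg$ is $\alpha$-forward integrable on $[a,b]$'' means exactly that the two series
\[
\sum_{k=0}^{+\infty}f(a+k\alpha)g(a+k\alpha)\quad\text{and}\quad\sum_{k=0}^{+\infty}f(b+k\alpha)g(b+k\alpha)
\]
converge. I will treat the two endpoints in the same way, so let $x\in\{a,b\}$.

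First, I would exploit the fact that $f$ is $\alpha$-forward integrable on $[a,b]$, which by definition gives convergence of $\sum_{k=0}^{+\infty}f(x+k\alpha)$. A convergent series has terms tending to zero, so the sequence $(f(x+k\alpha))_{k\in\mathbb{N}_0}$ is bounded; choose $M_x>0$ with $|f(x+k\alpha)|\le M_x$ for all $k$. Next, using $g(x+k\alpha)\ge 0$, I obtain the pointwise majorization
\[
\bigl|f(x+k\alpha)\,g(x+k\alpha)\bigr|\le M_x\,g(x+k\alpha).
\]

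Now I would invoke the comparison test: since $g$ is $\alpha$-forward integrable on $[a,b]$, the series $\sum_{k=0}^{+\infty}g(x+k\alpha)$ converges, and its terms are nonnegative, so $\sum_{k=0}^{+\infty}M_x\,g(x+k\alpha)$ converges. Therefore $\sum_{k=0}^{+\infty}f(x+k\alpha)g(x+k\alpha)$ converges absolutely, hence converges. Applying this to both $x=a$ and $x=b$ shows that $\int_{a}^{+\infty}(fg)(t)\Delta_{\alpha}t$ and $\int_{b}^{+\infty}(fg)(t)\Delta_{\alpha}t$ both exist, which is exactly the definition of $\alpha$-forward integrability of $fg$ on $[a,b]$.

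I do not anticipate a real obstacle here: the argument is a one-line comparison once one unfolds Definition~\ref{def:o1}. The only point that needs a brief justification is the boundedness of $(f(x+k\alpha))_k$, which is where the hypothesis that $f$ itself is $\alpha$-forward integrable (rather than merely defined) is used; the nonnegativity of $g$ is used to pass from $|fg|\le M_x g$ to absolute convergence via the nonnegative-term comparison test.
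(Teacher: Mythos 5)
Your proof is correct and follows essentially the same route as the paper: both arguments note that convergence of $\sum_{k}f(x+k\alpha)$ forces its terms to be bounded (the paper uses the tail bound $|f(x+k\alpha)|\leqslant 1$ for $k>N$, you use a global bound $M_x$), and then apply the comparison test against the convergent nonnegative series $\sum_{k}g(x+k\alpha)$ to get absolute convergence of $\sum_{k}f(x+k\alpha)g(x+k\alpha)$ at $x=a$ and $x=b$.
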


\begin{proof}
Since $g$ is $\alpha$-forward integrable, then both series
$\alpha\sum_{k=0}^{+\infty}g\left(  a+k\alpha\right)$
and $\alpha\sum_{k=0}^{+\infty}g\left(  b+k\alpha\right)$
converge. We want to study the nature of series
$\alpha\sum_{k=0}^{+\infty}fg\left(  a+k\alpha\right)$
and $\alpha\sum_{k=0}^{+\infty}fg\left(  b+k\alpha\right)$.
Since there exists an order $N\in\mathbb{N}$ such that
$\left\vert fg\left(  b+k\alpha\right)  \right\vert \leqslant g\left(
b+k\alpha\right)$  and $\left\vert fg\left(  a+k\alpha\right)
\right\vert \leqslant g\left(  a+k\alpha\right)$
for all $k>N$, then both
$\alpha\sum_{k=0}^{+\infty}fg\left(a+k\alpha\right)$
and $\alpha\sum_{k=0}^{+\infty}fg\left(  b+k\alpha\right)$
converge absolutely. The intended conclusion follows.
\end{proof}

\begin{theorem}
\label{p}
Let $f:I\rightarrow\mathbb{R}$ and $p>1$.
If $\left\vert f\right\vert $ is $\alpha$-forward integrable
on $\left[  a,b\right]$, then $\left\vert f\right\vert ^{p}$
is also $\alpha$-forward integrable on $\left[a,b\right]$.
\end{theorem}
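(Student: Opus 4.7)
The plan is to argue by direct comparison, exploiting the fact that $p>1$ together with the necessary condition for series convergence. I would work separately on the two tails $\alpha\sum_{k=0}^{+\infty}|f(a+k\alpha)|^{p}$ and $\alpha\sum_{k=0}^{+\infty}|f(b+k\alpha)|^{p}$, since by Definition~\ref{def:o1} the $\alpha$-forward integrability of $|f|^{p}$ on $[a,b]$ is equivalent to the convergence of these two series.

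First I would note that, by hypothesis, the series $\alpha\sum_{k=0}^{+\infty}|f(a+k\alpha)|$ converges. A standard necessary condition for convergence then gives $|f(a+k\alpha)|\to 0$ as $k\to+\infty$. In particular, there exists $N\in\mathbb{N}$ such that $|f(a+k\alpha)|\leqslant 1$ for all $k>N$. Since $p>1$, this yields the pointwise bound $|f(a+k\alpha)|^{p}\leqslant |f(a+k\alpha)|$ for every $k>N$.

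Next I would apply the comparison test to conclude that the tail $\sum_{k=N+1}^{+\infty}|f(a+k\alpha)|^{p}$ converges, and adding the finite initial sum $\sum_{k=0}^{N}|f(a+k\alpha)|^{p}$ gives convergence of the full series. An identical argument, with $a$ replaced by $b$, handles the second series. Combining, $|f|^{p}$ is $\alpha$-forward integrable on $[a,b]$.

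I do not expect any genuine obstacle: the only subtlety is that the elementary inequality $x^{p}\leqslant x$ for $p>1$ holds only when $0\leqslant x\leqslant 1$, which is precisely why one must first pass to sufficiently large indices using $|f(a+k\alpha)|\to 0$. Everything else is the comparison test together with the additivity of convergent series, which are immediate.
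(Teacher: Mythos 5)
Your proof is correct and follows exactly the paper's argument: the paper's own (terser) proof likewise asserts the existence of an $N$ with $\left\vert f\left(a+k\alpha\right)\right\vert^{p}\leqslant\left\vert f\left(a+k\alpha\right)\right\vert$ and $\left\vert f\left(b+k\alpha\right)\right\vert^{p}\leqslant\left\vert f\left(b+k\alpha\right)\right\vert$ for $k>N$ and concludes by comparison. You merely make explicit the justification (terms of a convergent series tend to zero, hence are eventually at most $1$) that the paper leaves implicit.
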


\begin{proof}
There exists $N\in\mathbb{N}$ such that
$\left\vert f\left(b+k\alpha\right)\right\vert^{p}
\leqslant
\left\vert f\left(b+k\alpha\right) \right\vert$
and
$\left\vert f\left(a+k\alpha\right)\right\vert^{p}
\leqslant
\left\vert f\left(a+k\alpha\right) \right\vert$
for all $k>N$. Therefore, $\left\vert f\right\vert^{p}$
is $\alpha$-forward integrable on $\left[a,b\right]$.
\end{proof}

\begin{theorem}
\label{desigualdade}
Let $f,g:I\rightarrow\mathbb{R}$ be $\alpha$-forward integrable on $\left[  a,b\right]$.
If $\left\vert f\left(  t\right)  \right\vert \leqslant g\left(  t\right)$
for all $t\in\left\{  a+k\alpha:k\in\mathbb{N}_{0}\right\}$,
then for $b\in\left\{  a+k\alpha:k\in\mathbb{N}_{0}\right\}$ one has
\[
\left\vert \int_{a}^{b}f\left(  t\right)  \Delta_{\alpha}t\right\vert
\leqslant\int_{a}^{b}g\left(  t\right)  \Delta_{\alpha}t.
\]
\end{theorem}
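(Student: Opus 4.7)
The plan is to reduce the inequality to a statement about a finite sum, since the hypothesis $b\in\{a+k\alpha:k\in\mathbb{N}_{0}\}$ is precisely what is needed to collapse the difference of two infinite tails into a finite head.

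First I would write $b=a+n\alpha$ for some $n\in\mathbb{N}_{0}$. Using Definition~\ref{def:o1} together with the fact that $\alpha\sum_{k=0}^{+\infty}f(a+k\alpha)=\alpha\sum_{k=0}^{n-1}f(a+k\alpha)+\alpha\sum_{k=0}^{+\infty}f(b+k\alpha)$ (a simple index shift, legitimate because both tail series converge by hypothesis), I obtain
\[
\int_{a}^{b}f(t)\,\Delta_{\alpha}t
=\alpha\sum_{k=0}^{n-1}f(a+k\alpha),
\qquad
\int_{a}^{b}g(t)\,\Delta_{\alpha}t
=\alpha\sum_{k=0}^{n-1}g(a+k\alpha).
\]
So the integrals from $a$ to $b$ are genuinely \emph{finite} N\"{o}rlund Riemann-type sums over the grid points $a,a+\alpha,\ldots,a+(n-1)\alpha$.

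From here the inequality is immediate. Applying the ordinary triangle inequality to the finite sum and then the pointwise bound $|f(a+k\alpha)|\leqslant g(a+k\alpha)$ (which holds for every $k\in\mathbb{N}_{0}$, in particular for $k=0,\ldots,n-1$), I get
\[
\left|\int_{a}^{b}f(t)\,\Delta_{\alpha}t\right|
=\left|\alpha\sum_{k=0}^{n-1}f(a+k\alpha)\right|
\leqslant\alpha\sum_{k=0}^{n-1}|f(a+k\alpha)|
\leqslant\alpha\sum_{k=0}^{n-1}g(a+k\alpha)
=\int_{a}^{b}g(t)\,\Delta_{\alpha}t.
\]

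No real obstacle arises: the only slightly delicate point is the index-shift that reduces the difference of the two infinite series to a finite sum, and this is justified by the $\alpha$-forward integrability of $f$ and $g$ on $[a,b]$, which guarantees absolute convergence of both tails. The hypothesis $b\in\{a+k\alpha:k\in\mathbb{N}_{0}\}$ is essential for the reduction; without it the ``missing'' terms between $b$ and the nearest grid point would prevent the cancellation.
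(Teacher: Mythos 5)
Your proof is correct and follows essentially the same route as the paper: write $b=a+n\alpha$, collapse the difference of the two convergent series into the finite sum $\alpha\sum_{k=0}^{n-1}f(a+k\alpha)$, then apply the triangle inequality and the pointwise bound $|f|\leqslant g$ at the grid points. (A minor remark: the index shift requires only ordinary convergence of the two series, not the absolute convergence you invoke, so your justification is slightly stronger than needed but harmless.)
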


\begin{proof}
Since $b\in\left\{  a+k\alpha:k\in\mathbb{N}_{0}\right\}$,
there exists $k_{1}$ such that $b=a+k_{1}\alpha$. Thus,
\begin{align*}
\left\vert \int_{a}^{b}f\left(  t\right)  \Delta_{\alpha}t\right\vert  &
=\left\vert \alpha\sum_{k=0}^{+\infty}f\left(  a+k\alpha\right)  -\alpha
\sum_{k=0}^{+\infty}f\left(  a+\left(  k_{1}+k\right)  \alpha\right)
\right\vert \\
&=\left\vert \alpha\sum_{k=0}^{+\infty}f\left(  a+k\alpha\right)  -\alpha
\sum_{k=k_{1}}^{+\infty}f\left(  a+k\alpha\right)  \right\vert
=\left\vert \alpha\sum_{k=0}^{k_{1}-1}f\left(  a+k\alpha\right)  \right\vert\\
&\leqslant \alpha\sum_{k=0}^{k_{1}-1}\left\vert f\left(  a+k\alpha\right)
\right\vert
\leqslant\alpha\sum_{k=0}^{k_{1}-1}g\left(  a+k\alpha\right) \\
& =\alpha\sum_{k=0}^{+\infty}g\left(  a+k\alpha\right)  -\alpha\sum_{k=k_{1}
}^{+\infty}g\left(  a+k\alpha\right)
=\int_{a}^{b}g\left(  t\right)  \Delta_{\alpha}t.
\end{align*}
\end{proof}

\begin{corollary}
\label{desigualdade2}
Let $f,g:I\rightarrow\mathbb{R}$ be
$\alpha$-forward integrable on $\left[a,b\right]$
with $b = a+k\alpha$ for some $k\in\mathbb{N}_{0}$.

\begin{enumerate}
\item If $f\left(  t\right)  \geqslant 0$
for all $t\in\left\{  a+k\alpha:k\in\mathbb{N}_{0}\right\}$,
then $\int_{a}^{b}f\left(  t\right)  \Delta_{\alpha}t \geqslant 0$.

\item If $g\left(  t\right)  \geqslant f$ $\left(  t\right)$ for all
$t\in\left\{  a+k\alpha:k\in\mathbb{N}_{0}\right\}$, then
$\int_{a}^{b}g\left(  t\right)  \Delta_{\alpha}t\geqslant\int_{a}^{b}f\left(
t\right)  \Delta_{\alpha}t$.
\end{enumerate}
\end{corollary}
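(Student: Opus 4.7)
The plan is to deduce both items from the inequality in Theorem~\ref{desigualdade} together with the linearity properties of the $\alpha$-forward integral already established.

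For item~1, the cleanest route is to apply Theorem~\ref{desigualdade} with the auxiliary choice $g := f$. Since $f(t)\geqslant 0$ on the grid $\{a+k\alpha : k\in\mathbb{N}_0\}$, we trivially have $|f(t)|\leqslant f(t)$ on that grid, and the additional hypothesis $b=a+k\alpha$ for some $k\in\mathbb{N}_0$ is exactly what Theorem~\ref{desigualdade} requires. That theorem then delivers
\[
\left|\int_a^b f(t)\,\Delta_\alpha t\right|\leqslant \int_a^b f(t)\,\Delta_\alpha t,
\]
which forces $\int_a^b f(t)\,\Delta_\alpha t\geqslant 0$. (Alternatively, one can unfold the definition: with $b=a+k_1\alpha$ the integral collapses to $\alpha\sum_{k=0}^{k_1-1} f(a+k\alpha)$, a finite sum of nonnegative terms.)

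For item~2, the idea is to reduce to item~1 via the function $h:=g-f$. By parts~4 and 5 of the first theorem of this section, $h$ is $\alpha$-forward integrable on $[a,b]$, and by hypothesis $h(t)\geqslant 0$ on the relevant grid. Item~1 then yields $\int_a^b h(t)\,\Delta_\alpha t\geqslant 0$, and one further application of linearity rearranges this into
\[
\int_a^b g(t)\,\Delta_\alpha t\geqslant \int_a^b f(t)\,\Delta_\alpha t.
\]

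No genuine obstacle is anticipated: the statement is essentially a repackaging of Theorem~\ref{desigualdade} plus linearity, and the only hypothesis to track carefully is that $b$ lies on the $\alpha$-grid starting at $a$, which is inherited directly from the corollary's assumption.
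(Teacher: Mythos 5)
Your proof is correct and follows exactly the route the paper intends: the corollary is stated without proof as an immediate consequence of Theorem~\ref{desigualdade}, and your derivation (item~1 via $|f|\leqslant f$ on the grid, item~2 via $h=g-f$ and linearity) is the natural way to make that implicit argument explicit. All hypotheses, including $b=a+k\alpha$, are tracked correctly.
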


We can now prove the following fundamental
theorem of the $\alpha$-forward calculus.

\begin{theorem}[Fundamental theorem of N\"{o}rlund calculus]
Let $f:I\rightarrow\mathbb{R}$ be $\alpha$-forward integrable
over $I$. Let $x\in I$ and define
$F\left(  x\right)  :=\int_{a}^{x}f\left(  t\right)  \Delta_{\alpha}t$.
Then, $\Delta_{\alpha}\left[  F\right]  \left(  x\right)  =f\left(  x\right)$.
Conversely,
$\int_{a}^{b}\Delta_{\alpha}\left[  f\right]  \left(  t\right)  \Delta_{\alpha}t
=f\left(  b\right)  -f\left(  a\right)$.
\end{theorem}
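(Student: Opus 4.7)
The plan is to carry out both parts by direct expansion of the definitions, exploiting the identity $\alpha\Delta_\alpha[g](t)=g(t+\alpha)-g(t)$ to convert Nörlund sums of differences into telescoping series at the basepoints $a$ and $b$.

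For the direct statement, I would compute $\Delta_\alpha[F](x)=(F(x+\alpha)-F(x))/\alpha$ by writing
\[
F(x+\alpha)-F(x)=\int_x^{+\infty}f(t)\,\Delta_\alpha t-\int_{x+\alpha}^{+\infty}f(t)\,\Delta_\alpha t=\alpha\sum_{k=0}^{+\infty}f(x+k\alpha)-\alpha\sum_{k=0}^{+\infty}f(x+(k+1)\alpha),
\]
re-indexing the second series to start at $k=1$ and observing that the two tails cancel term by term, leaving the isolated summand $\alpha f(x)$. Dividing by $\alpha$ then gives $\Delta_\alpha[F](x)=f(x)$.

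For the converse I would unfold the integral as
\[
\int_a^b\Delta_\alpha[f](t)\,\Delta_\alpha t=\alpha\sum_{k=0}^{+\infty}\Delta_\alpha[f](a+k\alpha)-\alpha\sum_{k=0}^{+\infty}\Delta_\alpha[f](b+k\alpha),
\]
and substitute $\alpha\Delta_\alpha[f](c+k\alpha)=f(c+(k+1)\alpha)-f(c+k\alpha)$. Each resulting sum is telescoping with $N$th partial sum $f(c+(N+1)\alpha)-f(c)$, so passing to the limit produces $(L_a-f(a))-(L_b-f(b))$, where $L_c:=\lim_{N\to+\infty}f(c+N\alpha)$ for $c=a$ and $c=b$.

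The main obstacle is controlling those two tail limits; without them vanishing, the identity would be off by a constant. The key observation is that the global hypothesis that $f$ is $\alpha$-forward integrable over $I$ forces convergence of $\alpha\sum_{k=0}^{+\infty}f(a+k\alpha)$ and $\alpha\sum_{k=0}^{+\infty}f(b+k\alpha)$, so their general terms must tend to $0$. Hence $L_a=L_b=0$ and the expression collapses to $f(b)-f(a)$, as required.
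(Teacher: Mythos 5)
Your proof is correct and follows essentially the same route as the paper: both parts are obtained by expanding the N\"{o}rlund sums from the definition and telescoping. You are in fact slightly more explicit than the paper, which disposes of the tail terms $f(a+N\alpha)$ and $f(b+N\alpha)$ with a bare appeal to ``properties of Mengoli's series,'' whereas you derive their vanishing directly from the convergence of the series defining $\alpha$-forward integrability.
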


\begin{proof}
If $G\left(  x\right)=
-\int_{x}^{+\infty}f\left(t\right)\Delta_{\alpha}t$, then
\begin{align*}
\Delta_{\alpha}\left[  G\right]  \left(  x\right)
&=\frac{G\left(x+\alpha\right)  -G\left(  x\right)  }{\alpha}
=\frac{-\alpha\sum_{k=0}^{+\infty}f\left(  x+\alpha+k\alpha\right)
+\alpha\sum_{k=0}^{+\infty}f\left(  x+k\alpha\right)  }{\alpha}\\
& =\sum_{k=0}^{+\infty}f\left(  x+k\alpha\right)
-\sum_{k=0}^{+\infty}f\left(  x+\left(  k+1\right)  \alpha\right)
=f\left(  x\right).
\end{align*}
Therefore,
$\Delta_{\alpha}\left[  F\right]  \left(  x\right)
=\Delta_{\alpha}\left(\int_{a}^{+\infty}f\left(  t\right)
\Delta_{\alpha}t-\int_{x}^{+\infty}
f\left(  t\right)  \Delta_{\alpha}t\right)
=f\left(  x\right)$.
Using the definition of $\alpha$-forward difference operator, the second part
of the theorem is also a consequence of the properties of Mengoli's series.
Since
\begin{align*}
\int_{a}^{+\infty}\Delta_{\alpha}\left[  f\right]  \left(  t\right)
\Delta_{\alpha}t  & =\alpha\sum_{k=0}^{+\infty}\Delta_{\alpha}\left[
f\right]  \left(  a+k\alpha\right)
=\alpha\sum_{k=0}^{+\infty}\frac{f\left(  a+k\alpha+\alpha\right)  -f\left(
a+k\alpha\right)  }{\alpha}\\
& =\sum_{k=0}^{+\infty}\bigg(f\left(  a+\left(  k+1\right)  \alpha\right)
-f\left(  a+k\alpha\right)  \bigg) =-f\left(  a\right)
\end{align*}
and
$\int_{b}^{+\infty}\Delta_{\alpha}\left[  f\right]  \left(  t\right)
\Delta_{\alpha}t=-f\left(  b\right)$,
it follows that
\begin{equation*}
\int_{a}^{b}\Delta_{\alpha}\left[  f\right]  \left(  t\right)  \Delta_{\alpha}t
=\int_{a}^{+\infty}f\left(  t\right)  \Delta_{\alpha}t-\int_{b}^{+\infty}
f\left(  t\right)  \Delta_{\alpha}t
=f\left(  b\right)  -f\left(  a\right).
\end{equation*}
\end{proof}

\begin{corollary}[$\alpha$-forward integration by parts]
\label{partes}
Let $f,g:I\rightarrow\mathbb{R}$.
If $f g$ and $f\Delta_{\alpha}\left[  g\right]  $ are $\alpha
$-forward integrable on $\left[  a,b\right]  $, then
\[
\int_{a}^{b}f\left(  t\right)  \Delta_{\alpha}\left[  g\right]  \left(
t\right)  \Delta_{\alpha}t=f\left(  t\right)  g\left(  t\right)
\bigg|_{a}^{b}-\int_{a}^{b}\Delta_{\alpha}\left[  f\right]  \left(  t\right)
g\left(  t+\alpha\right)  \Delta_{\alpha}t
\]
\end{corollary}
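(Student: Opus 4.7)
The plan is to reduce the identity to the $\alpha$-forward product rule plus the Fundamental Theorem of N\"{o}rlund calculus already established.

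First, I would compute $\Delta_{\alpha}[fg](t)$ directly from the definition. By adding and subtracting $f(t)g(t+\alpha)$ in the numerator of
\[
\frac{f(t+\alpha)g(t+\alpha)-f(t)g(t)}{\alpha},
\]
one gets the product rule
\[
\Delta_{\alpha}[fg](t)=\Delta_{\alpha}[f](t)\,g(t+\alpha)+f(t)\,\Delta_{\alpha}[g](t),
\]
or equivalently
\[
f(t)\Delta_{\alpha}[g](t)=\Delta_{\alpha}[fg](t)-\Delta_{\alpha}[f](t)\,g(t+\alpha).
\]

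Next, I would integrate this identity from $a$ to $b$. The hypotheses tell us that $fg$ and $f\Delta_{\alpha}[g]$ are $\alpha$-forward integrable on $[a,b]$. Combining this with the product rule rewritten as $\Delta_{\alpha}[f](t)g(t+\alpha)=\Delta_{\alpha}[fg](t)-f(t)\Delta_{\alpha}[g](t)$ and with the Fundamental Theorem of N\"{o}rlund calculus applied to $fg$, the expression $\Delta_{\alpha}[f]\cdot g(\cdot+\alpha)$ inherits $\alpha$-forward integrability on $[a,b]$, so the linearity property from Theorem~1 may be applied. Using the Fundamental Theorem once more, namely $\int_a^b \Delta_{\alpha}[fg](t)\Delta_{\alpha}t=f(b)g(b)-f(a)g(a)$, and rearranging the resulting identity yields exactly the claimed formula.

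The only genuinely nontrivial issue is the bookkeeping of which pieces are $\alpha$-forward integrable, because the corollary hypothesizes integrability of $fg$ and of $f\Delta_{\alpha}[g]$ but not directly of $\Delta_{\alpha}[f]\cdot g(\cdot+\alpha)$ or of $\Delta_{\alpha}[fg]$. I would handle this by reading the product rule as an algebraic identity between functions at the points $a+k\alpha$ and using linearity (Theorem~1, items 4 and 5) to transfer integrability across the equation, which lets every term in the derivation be safely integrated. Apart from this minor verification, the proof is a one-line manipulation of the product rule combined with the Fundamental Theorem; no real obstacle is expected.
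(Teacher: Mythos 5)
Your proposal is correct and follows essentially the same route as the paper: apply the $\alpha$-forward product rule to write $f\Delta_{\alpha}[g]=\Delta_{\alpha}[fg]-\Delta_{\alpha}[f]\cdot g(\cdot+\alpha)$, integrate, and invoke the Fundamental Theorem of N\"{o}rlund calculus on $fg$. Your extra remark on transferring integrability to $\Delta_{\alpha}[f]\cdot g(\cdot+\alpha)$ via linearity is a point the paper's proof passes over silently, and it is handled correctly.
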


\begin{proof}
Since
$\Delta_{\alpha}\left[  fg\right]  \left(  t\right)  =\Delta_{\alpha}\left[
f\right]  \left(  t\right)  g\left(  t+\alpha\right)  +f\left(  t\right)
\Delta_{\alpha}\left[  g\right]  \left(  t\right)$, then
\begin{align*}
\int_{a}^{b}f\left(  t\right)  \Delta_{\alpha}\left[  g\right]  \left(
t\right)  \Delta_{\alpha}t  & =\int_{a}^{b}\bigg(\Delta_{\alpha}\left[
fg\right]  \left(  t\right)  -\Delta_{\alpha}\left[  f\right]  \left(
t\right)  g\left(  t+\alpha\right)  \bigg)\Delta_{\alpha}t\\
& =\int_{a}^{b}\Delta_{\alpha}\left[  fg\right]  \left(  t\right)
\Delta_{\alpha}t-\int_{a}^{b}\Delta_{\alpha}\left[  f\right]  \left(
t\right)  g\left(  t+\alpha\right)  \Delta_{\alpha}t\\
& =f\left(  t\right)  g\left(  t\right)  \bigg|_{a}^{b}-\int_{a}^{b}
\Delta_{\alpha}\left[  f\right]  \left(  t\right)  g\left(  t+\alpha\right)
\Delta_{\alpha}t\text{.}
\end{align*}
\end{proof}

\begin{remark}
Our study of the N\"{o}rlund sum is in agreement with
the Hahn quantum calculus \cite{Aldwoah,withMiguel01,MalinowskaTorres}.
In \cite{Kac}
$\int_{a}^{b}f\left(  t\right)  \Delta_{\alpha}t=\alpha\left[  f\left(
a\right)  +f\left(  a+\alpha\right)
+ \cdots +f\left(  b-\alpha\right)  \right]$
for $a<b$ such that $b-a\in\alpha\mathbb{Z}$,
$\alpha\in\mathbb{R}^{+}$. In contrast with \cite{Kac}, our definition is valid
for any two real points $a,b$ and not only for those points belonging to the time
scale $\alpha\mathbb{Z}$. The definitions (only) coincide
if function $f$ is $\alpha$-forward
integrable on $\left[a,b\right]$.
\end{remark}

Similarly, we introduce the $\beta$-backward integral.

\begin{definition}
\label{beta}
Let $I$ be an interval of $\mathbb{R}$ such that $a,b\in I$
with $a<b$ and $\inf I=-\infty$. For $f:I\rightarrow\mathbb{R}$
and $\beta >0$ we define the $\beta$-backward integral of $f$ from $a$ to $b$ by
\[
\int_{a}^{b}f\left(  t\right)  \nabla_{\beta}t=\int_{-\infty}^{b}f\left(
t\right)  \nabla_{\beta}t-\int_{-\infty}^{a}f\left(  t\right)  \nabla_{\beta}t,
\]
where $\displaystyle \int_{-\infty}^{x}f\left(  t\right)\nabla_{\beta}t
=\beta\sum_{k=0}^{+\infty}f\left(  x-k\beta\right)$,
provided the series converges at $x=a$ and $x=b$. In that case, $f$ is
called $\beta$-backward integrable on $\left[a,b\right]$. We say that $f$
is $\beta$-backward integrable over $I$ if it is $\beta$-backward integrable
for all $a,b\in I$.
\end{definition}

The $\beta$-backward N\"{o}rlund sum has similar results and properties as the
$\alpha$-forward N\"{o}rlund sum. In particular, the $\beta$-backward integral
is the inverse operator of $\nabla_\beta$.


\section{The $\alpha,\beta$-symmetric N\"{o}rlund sum}
\label{sec:sns}

We define the $\alpha,\beta$-symmetric integral
as a linear combination of the $\alpha$-forward
and the $\beta$-backward integrals.

\begin{definition}
\label{def:3}
Let $f:\mathbb{R}\rightarrow\mathbb{R}$ and $a,b\in\mathbb{R}$, $a<b$.
If $f$ is $\alpha$-forward and $\beta$-backward integrable
on $\left[  a,b\right]$, $\alpha, \beta \ge 0$ with $\alpha + \beta > 0$,
then we define the $\alpha,\beta$-symmetric integral of $f$ from $a$ to $b$ by
\[
\int_{a}^{b}f\left(  t\right)  d_{\alpha,\beta}t=\frac{\alpha}{\alpha+\beta
}\int_{a}^{b}f\left(  t\right)  \Delta_{\alpha}t+\frac{\beta}{\alpha+\beta
}\int_{a}^{b}f\left(  t\right)  \nabla_{\beta}t\text{.}
\]
Function $f$ is $\alpha,\beta$-symmetric integrable if it is
$\alpha,\beta$-symmetric integrable for all $a,b\in\mathbb{R}$.
\end{definition}

\begin{remark}
Note that if $ \alpha\in\mathbb{R}^{+}$ and $\beta=0$,
then $\displaystyle\int_{a}^{b}f\left(  t\right)
d_{\alpha,\beta}t=\int_{a}^{b}f\left(  t\right)  \Delta_{\alpha}t$
and we do not need to assume in Definition~\ref{def:3} that
$f$ is $\beta$-backward integrable;
if $\alpha=0$ and $\beta\in\mathbb{R}^{+}$, then
$\displaystyle\int_{a}^{b}f\left(  t\right)  d_{\alpha,\beta}t
=\int_{a}^{b}f\left(  t\right)  \nabla_{\beta}t$
and we do not need to assume that
$f$ is $\alpha$-forward integrable.
\end{remark}

\begin{example}
Let $f\left(t\right)= 1/t^{2}$. Then
$\displaystyle \int_{1}^{3}\frac{1}{t^{2}}d_{2,2}t = \frac{10}{9}$.
\end{example}

The $\alpha,\beta$-symmetric integral has the following properties:

\begin{theorem}
\label{propriedades}
Let $f,g:\mathbb{R}\rightarrow\mathbb{R}$
be $\alpha,\beta$-symmetric integrable on $\left[a,b\right]$.
Let $c\in\left[  a,b\right]$ and $k\in\mathbb{R}$. Then,
\begin{enumerate}
\item $\displaystyle\int_{a}^{a}f\left(  t\right)  d_{\alpha,\beta}t=0$;

\item $\displaystyle\int_{a}^{b}f\left(  t\right)  d_{\alpha,\beta}t=\int
_{a}^{c}f\left(  t\right)  d_{\alpha,\beta}t+\int_{c}^{b}f\left(  t\right)
d_{\alpha,\beta}t$, when the integrals exist;

\item $\displaystyle\int_{a}^{b}f\left(  t\right)  d_{\alpha,\beta}t
=-\int_{b}^{a}f\left(  t\right)  d_{\alpha,\beta}t$;

\item $kf$ is $\alpha,\beta$-symmetric integrable on $\left[a,b\right]$
and
$\displaystyle \int_{a}^{b}kf\left(t\right) d_{\alpha,\beta}t
=k\int_{a}^{b}f\left(t\right) d_{\alpha,\beta}t$;

\item $f+g$ is $\alpha,\beta$-symmetric integrable
on $\left[a,b\right]$ and
\[
\int_{a}^{b}\left(  f+g\right)  \left(  t\right)  d_{\alpha,\beta}t=\int
_{a}^{b}f\left(  t\right)  d_{\alpha,\beta}t+\int_{a}^{b}g\left(  t\right)
d_{\alpha,\beta}t\text{;}
\]

\item $fg$ is $\alpha,\beta$-symmetric integrable on $\left[  a,b\right]$
provided $g$ is a nonnegative function.
\end{enumerate}
\end{theorem}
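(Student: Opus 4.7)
The plan is to reduce every assertion to the corresponding property of the $\alpha$-forward and $\beta$-backward integrals and then recombine linearly. By Definition~\ref{def:3},
\[
\int_{a}^{b} f(t)\, d_{\alpha,\beta} t = \frac{\alpha}{\alpha+\beta}\int_{a}^{b} f(t)\Delta_{\alpha}t + \frac{\beta}{\alpha+\beta}\int_{a}^{b} f(t)\nabla_{\beta} t,
\]
a fixed convex combination whose coefficients do not depend on $f$ or on the interval. Any identity that holds separately for the forward and for the backward integral therefore transfers to the symmetric one by linearity.

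For items~1--5 I would handle each statement uniformly: write both sides as $\tfrac{\alpha}{\alpha+\beta}(\cdot)_{\Delta_{\alpha}} + \tfrac{\beta}{\alpha+\beta}(\cdot)_{\nabla_{\beta}}$, invoke the corresponding item of the first theorem of Section~\ref{sec:2} on the forward summand and the parallel property of the $\beta$-backward calculus (which, as noted in the paragraph following Definition~\ref{beta}, enjoys ``similar results and properties'' with identical proofs) on the backward summand, and recollect. Item~1 uses the vanishing of each component integral at $a=b$; item~2 uses interval additivity of each component (under the stated hypothesis that the pieces exist); item~3 uses the sign change on swapping the limits; items~4 and~5 use $k$-homogeneity and additivity in $f$, which also provide the integrability of $kf$ and $f+g$ needed to place them within the scope of Definition~\ref{def:3}.

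Item~6 is the only place where a nontrivial tool is needed and is the step I would single out as the main (albeit modest) obstacle: linearity alone is not enough, since one must first know that $fg$ is both $\alpha$-forward and $\beta$-backward integrable on $[a,b]$ before the symmetric integral is even defined. Here I would apply the second theorem of Section~\ref{sec:2} — the product of an $\alpha$-forward integrable function with a nonnegative $\alpha$-forward integrable function is again $\alpha$-forward integrable — together with its $\beta$-backward analogue. These two separate appeals give the required integrabilities of $fg$, whereupon Definition~\ref{def:3} yields $\alpha,\beta$-symmetric integrability of $fg$. Once this integrability is established, no further identity is claimed, so the proof closes immediately.
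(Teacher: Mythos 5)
Your proposal is correct and follows exactly the route the paper takes: the paper's proof is the one-line remark that the results are ``easy consequences of the $\alpha$-forward and $\beta$-backward integral properties,'' and your argument simply makes that reduction explicit, including the correct appeal to the product-integrability theorem for item~6. No issues.
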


\begin{proof}
These results are easy consequences of the $\alpha$-forward
and $\beta$-backward integral properties.
\end{proof}

The next result follows immediately from Theorem~\ref{p} and the
corresponding $\beta$-backward version.

\begin{theorem}
\label{modulo}
Let $f:\mathbb{R}\rightarrow\mathbb{R}$ and $p>1$.
If $\left\vert f\right\vert $ is symmetric $\alpha,\beta$-integrable
on $\left[  a,b\right]$, then $\left\vert f\right\vert ^{p}$
is also $\alpha,\beta$-symmetric integrable on $\left[a,b\right]$.
\end{theorem}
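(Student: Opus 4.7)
The plan is to unpack the definition of $\alpha,\beta$-symmetric integrability and then invoke the forward and backward one-sided results directly. By Definition~\ref{def:3}, assuming $\alpha,\beta>0$ (the cases $\alpha=0$ or $\beta=0$ are handled by the Remark after Definition~\ref{def:3} and reduce to a single one-sided statement), a function being $\alpha,\beta$-symmetric integrable on $[a,b]$ is equivalent to being simultaneously $\alpha$-forward integrable and $\beta$-backward integrable on $[a,b]$.

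So first I would note that the hypothesis ``$|f|$ is $\alpha,\beta$-symmetric integrable on $[a,b]$'' unfolds into the two separate statements: $|f|$ is $\alpha$-forward integrable on $[a,b]$, and $|f|$ is $\beta$-backward integrable on $[a,b]$. Next, I would apply Theorem~\ref{p} to the first statement to deduce that $|f|^p$ is $\alpha$-forward integrable on $[a,b]$, and apply the $\beta$-backward analogue of Theorem~\ref{p} to the second statement to deduce that $|f|^p$ is $\beta$-backward integrable on $[a,b]$. Finally, since $|f|^p$ satisfies both one-sided integrability conditions, the linear combination defining $\int_a^b |f(t)|^p\, d_{\alpha,\beta}t$ in Definition~\ref{def:3} is meaningful, so $|f|^p$ is $\alpha,\beta$-symmetric integrable on $[a,b]$.

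There is essentially no obstacle here, since this is a bookkeeping argument that propagates the one-sided result through a linear combination; the only point worth checking is that the degenerate cases $\alpha=0$ or $\beta=0$ still work, but in those cases the symmetric integral coincides with a single one-sided integral and Theorem~\ref{p} (or its backward counterpart) applies verbatim.
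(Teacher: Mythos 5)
Your proposal is correct and matches the paper's argument exactly: the paper states that Theorem~\ref{modulo} ``follows immediately from Theorem~\ref{p} and the corresponding $\beta$-backward version,'' which is precisely the unpacking-and-recombining bookkeeping you describe. No further comment is needed.
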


\begin{theorem}
\label{des}
Let $f,g:\mathbb{R}\rightarrow\mathbb{R}$
be $\alpha,\beta$-symmetric integrable functions on $\left[a,b\right]$,
$\mathcal{A} := \left\{  a+k\alpha:k\in\mathbb{N}_{0}\right\}$
and $\mathcal{B} := \left\{  b-k\beta:k\in\mathbb{N}_{0}\right\}$.
For $b\in\mathcal{A}$ and $a\in\mathcal{B}$ one has:
\begin{enumerate}
\item if $\left\vert f\left(  t\right)  \right\vert
\leqslant g\left(t\right)$ for all $t\in\mathcal{A}\cup\mathcal{B}$, then
$\displaystyle \left\vert \int_{a}^{b}f\left(  t\right)  d_{\alpha,\beta}t\right\vert
\leqslant\int_{a}^{b}g\left(  t\right)  d_{\alpha,\beta}t$;

\item if $f\left(  t\right) \geqslant0$ for all
$t\in\mathcal{A}\cup\mathcal{B}$, then
$\displaystyle \int_{a}^{b}f\left(  t\right)  d_{\alpha,\beta}t\geqslant0$;

\item if $g\left(  t\right)  \geqslant f\left(  t\right)$
for all $t\in\mathcal{A}\cup\mathcal{B}$, then
$\displaystyle \int_{a}^{b}g\left(  t\right)
d_{\alpha,\beta}t\geqslant\int_{a}^{b}f\left(t\right)d_{\alpha,\beta}t$.
\end{enumerate}
\end{theorem}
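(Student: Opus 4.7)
The plan is to reduce each of the three assertions to the one-sided results already available for the $\alpha$-forward and $\beta$-backward N\"{o}rlund sums, then glue them together using the convex-combination formula of Definition~\ref{def:3}. The hypotheses $b\in\mathcal{A}$ and $a\in\mathcal{B}$ are precisely tailored for this: $b\in\mathcal{A}$ means $b=a+k_{1}\alpha$ for some $k_{1}\in\mathbb{N}_{0}$, which is exactly what Theorem~\ref{desigualdade} and Corollary~\ref{desigualdade2} require, while $a\in\mathcal{B}$ means $a=b-k_{2}\beta$ for some $k_{2}\in\mathbb{N}_{0}$, which is what the $\nabla_{\beta}$ analogues (available by the symmetry noted after Definition~\ref{beta}) require.

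For item~(1), I would first apply Theorem~\ref{desigualdade} to obtain
\[
\left|\int_{a}^{b}f(t)\,\Delta_{\alpha}t\right|\leqslant\int_{a}^{b}g(t)\,\Delta_{\alpha}t,
\]
since the hypothesis $|f(t)|\leqslant g(t)$ holds in particular on $\mathcal{A}$, and then apply the $\beta$-backward analogue of Theorem~\ref{desigualdade} (valid because $a\in\mathcal{B}$ and the bound holds on $\mathcal{B}$) to get
\[
\left|\int_{a}^{b}f(t)\,\nabla_{\beta}t\right|\leqslant\int_{a}^{b}g(t)\,\nabla_{\beta}t.
\]
Multiplying these by the nonnegative weights $\alpha/(\alpha+\beta)$ and $\beta/(\alpha+\beta)$ respectively, adding, and using the triangle inequality on the left together with Definition~\ref{def:3}, yields the claimed bound.

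Items~(2) and~(3) are handled in the same spirit using Corollary~\ref{desigualdade2} and its $\beta$-backward counterpart: under the stated sign hypotheses on $\mathcal{A}\cup\mathcal{B}$ the forward and backward integrals are individually nonnegative (resp.\ satisfy the monotonicity $g\geqslant f$), so the convex combination defining $\int_{a}^{b}\,d_{\alpha,\beta}t$ inherits the inequality. Note that (3) could alternatively be derived from (2) applied to $g-f$ by linearity (Theorem~\ref{propriedades}~(5)).

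I do not anticipate a real obstacle; the only point that needs care is bookkeeping the hypotheses, verifying that the convex-combination weights are nonnegative (hence preserve inequalities), and observing that the pointwise bounds on $\mathcal{A}\cup\mathcal{B}$ supply both the forward bound (needed on $\mathcal{A}$) and the backward bound (needed on $\mathcal{B}$) simultaneously.
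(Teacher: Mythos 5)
Your proposal is correct and is exactly the argument the paper intends: the paper's own proof simply states that the result "follows from Theorem~\ref{desigualdade} and Corollary~\ref{desigualdade2} and the corresponding $\beta$-backward versions," which is precisely your reduction to the forward and backward cases followed by taking the convex combination from Definition~\ref{def:3}. You have merely written out the bookkeeping (triangle inequality, nonnegative weights) that the paper leaves implicit.
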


\begin{proof}
It follows from Theorem~\ref{desigualdade} and Corollary~\ref{desigualdade2}
and the corresponding $\beta$-backward versions.
\end{proof}

In Theorem~\ref{thm:mvt} we assume that $a,b\in\mathbb{R}$ with
$b\in\mathcal{A} := \left\{  a+k\alpha:k\in\mathbb{N}_{0}\right\}$ and
$a\in\mathcal{B} := \left\{  b-k\beta:k\in\mathbb{N}_{0}\right\}$, where
$\alpha,\beta\in\mathbb{R}_{0}^{+}$, $\alpha + \beta \ne 0$.

\begin{theorem}[Mean value theorem]
\label{thm:mvt}
Let $f,g:\mathbb{R}\rightarrow\mathbb{R}$
be bounded and $\alpha,\beta$-symmetric integrable on $[a,b]$
with $g$ nonnegative. Let $m$ and $M$ be
the infimum and the supremum, respectively, of function $f$.
Then, there exists a real number $K$ satisfying the inequalities
$m\leqslant K\leqslant M$ such that
$\displaystyle \int_{a}^{b}f\left(  t\right)  g\left(  t\right)  d_{\alpha,\beta}t
=K\int_{a}^{b}g\left(  t\right)  d_{\alpha,\beta}t$.
\end{theorem}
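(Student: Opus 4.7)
The plan is the classical sandwich argument adapted to the $\alpha,\beta$-symmetric setting, using only the monotonicity and linearity properties already established.

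First I would note that $fg$ is $\alpha,\beta$-symmetric integrable on $[a,b]$ by item~6 of Theorem~\ref{propriedades}, since $g \geqslant 0$ and both $f,g$ are integrable. Next, from $m \leqslant f(t) \leqslant M$ for every $t$ (in particular on $\mathcal{A}\cup\mathcal{B}$) and $g(t) \geqslant 0$, multiplication yields
\[
m\,g(t) \leqslant f(t)g(t) \leqslant M\,g(t) \qquad \text{for all } t \in \mathcal{A}\cup\mathcal{B}.
\]
Applying item~3 of Theorem~\ref{des} (monotonicity of the $\alpha,\beta$-symmetric integral) together with items~4 and~5 of Theorem~\ref{propriedades} (linearity), I would obtain
\[
m\int_{a}^{b} g(t)\, d_{\alpha,\beta}t \;\leqslant\; \int_{a}^{b} f(t)g(t)\, d_{\alpha,\beta}t \;\leqslant\; M\int_{a}^{b} g(t)\, d_{\alpha,\beta}t.
\]

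The last step is to extract $K$. If $\int_{a}^{b} g(t)\, d_{\alpha,\beta}t = 0$, then the sandwich forces $\int_{a}^{b} f(t)g(t)\, d_{\alpha,\beta}t = 0$ as well, so any $K\in[m,M]$ trivially satisfies the required identity. Otherwise item~2 of Theorem~\ref{des} gives $\int_{a}^{b} g(t)\, d_{\alpha,\beta}t > 0$, and I would set
\[
K := \frac{\int_{a}^{b} f(t)g(t)\, d_{\alpha,\beta}t}{\int_{a}^{b} g(t)\, d_{\alpha,\beta}t},
\]
which by the sandwich inequality satisfies $m \leqslant K \leqslant M$ and, by construction, gives the desired equality.

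The argument is entirely straightforward; the only subtle point is the degenerate case $\int g = 0$, which must be handled separately since we cannot divide by zero. Everything else reduces to routine invocation of monotonicity and linearity of the $\alpha,\beta$-symmetric integral, both of which are already available from Theorems~\ref{propriedades} and~\ref{des}.
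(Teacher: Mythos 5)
Your proposal is correct and follows essentially the same route as the paper's own proof: the sandwich $mg \leqslant fg \leqslant Mg$ on $\mathcal{A}\cup\mathcal{B}$, monotonicity and linearity of the $\alpha,\beta$-symmetric integral, and a case split on whether $\int_a^b g(t)\,d_{\alpha,\beta}t$ vanishes. No gaps; your explicit handling of the degenerate case matches the paper's.
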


\begin{proof}
Since $m\leqslant f\left(  t\right)  \leqslant M\text{ for all }t\in\mathbb{R}$
and $g\left(  t\right)  \geqslant 0$, then
$mg\left(  t\right)  \leqslant f\left(  t\right)  g\left(  t\right)  \leqslant
Mg\left(  t\right)$ for all $t\in\mathcal{A} \cup \mathcal{B}$.
All functions $mg$, $fg$ and $Mg$ are
$\alpha,\beta$-symmetric integrable on $[a,b]$.
By Theorems~\ref{propriedades} and \ref{des},
$m\int_{a}^{b}g\left(  t\right)  d_{\alpha,\beta}t
\leqslant\int_{a}^{b}f\left(t\right)  g\left(  t\right)  d_{\alpha,\beta}t$
$\leqslant M\int_{a}^{b}g\left(
t\right)  d_{\alpha,\beta}t$.
If $\int_{a}^{b}g\left(  t\right)  d_{\alpha,\beta}t=0$, then
$\int_{a}^{b}f\left(  t\right)  g\left(  t\right)
d_{\alpha,\beta}t=0$; if $\int_{a}^{b}g\left(  t\right)
d_{\alpha,\beta}t>0$, then
$m\leqslant\frac{\int_{a}^{b}f\left(  t\right)  g\left(  t\right)
d_{\alpha,\beta}t}{\int_{a}^{b}g\left(  t\right)
d_{\alpha,\beta}t} \leqslant M$.
Therefore, the middle term of these inequalities
is equal to a number $K$, which yields the intended result.
\end{proof}


\section{$\alpha,\beta$-Symmetric Integral Inequalities}
\label{sec:ineq}

Inspired in the work by Agarwal et al. \cite{Agarval},
we now present $\alpha,\beta$-symmetric versions of H\"{o}lder,
Cauchy--Schwarz and Minkowski inequalities. As before,
we assume that $a,b\in\mathbb{R}$ with
$b\in\mathcal{A} := \left\{  a+k\alpha:k\in\mathbb{N}_{0}\right\}$ and
$a\in\mathcal{B} := \left\{  b-k\beta:k\in\mathbb{N}_{0}\right\}$, where
$\alpha,\beta\in\mathbb{R}_{0}^{+}$, $\alpha + \beta \ne 0$.

\begin{theorem}[H\"{o}lder's inequality]
\label{Holders Inequality}
Let $f,g:\mathbb{R}\rightarrow\mathbb{R}$
and $a,b\in\mathbb{R}$ with $a<b$.
If $\left\vert f\right\vert $ and $\left\vert g\right\vert$ are
$\alpha,\beta$-symmetric integrable on $\left[a,b\right]$, then
\begin{equation}
\label{eq:hi}
\int_{a}^{b}\left\vert f\left(  t\right)  g\left(  t\right)  \right\vert
d_{\alpha,\beta}t\leqslant\left(  \int_{a}^{b}\left\vert f\left(  t\right)
\right\vert ^{p}d_{\alpha,\beta}t\right)  ^{\frac{1}{p}}\left(
\int_{a}^{b}\left\vert g\left(  t\right)\right\vert^{q}
d_{\alpha,\beta}t\right)^{\frac{1}{q}},
\end{equation}
where $p>1$ and $q=p/(p-1)$.
\end{theorem}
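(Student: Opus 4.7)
The plan is to follow the classical proof of H\"{o}lder's inequality: apply Young's inequality $AB \leqslant A^p/p + B^q/q$ pointwise at each $t \in \mathcal{A}\cup\mathcal{B}$, normalize by the natural $p$- and $q$-``norms'' of $f$ and $g$, and then integrate using the monotonicity of the $\alpha,\beta$-symmetric integral (Theorem~\ref{des}). Before doing this, I would check that $\left\vert f\right\vert^{p}$, $\left\vert g\right\vert^{q}$, and $\left\vert fg\right\vert$ are all $\alpha,\beta$-symmetric integrable on $[a,b]$: the first two follow directly from Theorem~\ref{modulo}, while $\left\vert fg\right\vert=\left\vert f\right\vert\cdot\left\vert g\right\vert$ is integrable by part~6 of Theorem~\ref{propriedades}, since $\left\vert g\right\vert$ is nonnegative and integrable.

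Set $F := \int_{a}^{b}\left\vert f(t)\right\vert^{p}d_{\alpha,\beta}t$ and $G := \int_{a}^{b}\left\vert g(t)\right\vert^{q}d_{\alpha,\beta}t$. If $F=0$ or $G=0$, then, since the $\alpha,\beta$-symmetric integral of a nonnegative function is a positive linear combination of non-negative sums over $\mathcal{A}$ and $\mathcal{B}$, the function $\left\vert f\right\vert^{p}$ (respectively $\left\vert g\right\vert^{q}$) must vanish identically on $\mathcal{A}\cup\mathcal{B}$. Consequently $\left\vert fg\right\vert$ also vanishes on $\mathcal{A}\cup\mathcal{B}$, so by Theorem~\ref{des}(1) the left-hand side of \eqref{eq:hi} equals zero and the inequality is trivial.

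Assuming $F, G > 0$, I apply Young's inequality to $A = \left\vert f(t)\right\vert/F^{1/p}$ and $B = \left\vert g(t)\right\vert/G^{1/q}$ to obtain, for every $t \in \mathcal{A}\cup\mathcal{B}$, the pointwise bound
\[
\frac{\left\vert f(t) g(t)\right\vert}{F^{1/p}\, G^{1/q}} \leqslant \frac{\left\vert f(t)\right\vert^{p}}{p\,F} + \frac{\left\vert g(t)\right\vert^{q}}{q\,G}.
\]
Integrating this inequality on $[a,b]$ by means of the monotonicity in Theorem~\ref{des}(3) combined with linearity (Theorem~\ref{propriedades}, parts 4 and 5), and using $1/p + 1/q = 1$, the right-hand side collapses to $1$, yielding $\int_{a}^{b}\left\vert fg\right\vert d_{\alpha,\beta}t \leqslant F^{1/p} G^{1/q}$, which is precisely \eqref{eq:hi}.

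The bulk of this argument is standard real analysis; the only place where the N\"{o}rlund structure really enters is the monotonicity step, which requires exactly the hypotheses $b\in\mathcal{A}$ and $a\in\mathcal{B}$ standing throughout the section. I expect the only minor obstacle to be the degenerate case $F=0$ or $G=0$: one must observe that vanishing of the $\alpha,\beta$-symmetric integral of a non-negative function forces pointwise vanishing on $\mathcal{A}\cup\mathcal{B}$, which is not automatic in general integration theory but is immediate here, because the integral is a convex combination of two absolutely convergent non-negative series.
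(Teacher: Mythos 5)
Your proposal is correct and follows essentially the same route as the paper: normalize by the integrals of $\left\vert f\right\vert^{p}$ and $\left\vert g\right\vert^{q}$, apply Young's inequality $A B\leqslant A^{p}/p+B^{q}/q$ pointwise, and integrate using monotonicity and linearity. You are in fact slightly more careful than the paper, which dismisses the case where one of the integrals vanishes with a ``without loss of generality''; your observation that vanishing of the integral of a nonnegative function forces pointwise vanishing at the relevant grid points (and hence makes the left-hand side zero) is the right way to justify that step.
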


\begin{proof}
For $\alpha,\beta\in\mathbb{R}_{0}^{+}$, $\alpha + \beta \ne 0$,
the following inequality holds:
$\alpha^{\frac{1}{p}}\beta^{\frac{1}{q}}\leqslant\frac{\alpha}{p}+\frac{\beta}{q}$.
Without loss of generality, suppose that
$\displaystyle \left(  \int_{a}^{b}\left\vert f\left(  t\right)  \right\vert ^{p}
d_{\alpha,\beta}t\right)  \left(  \int_{a}^{b}\left\vert g\left(  t\right)
\right\vert ^{q}d_{\alpha,\beta}t\right)  \neq 0$
(note that both integrals exist by Theorem~\ref{modulo}). Set
$\xi\left( t\right)
=\left\vert f\left(  t\right)  \right\vert ^{p}/
\int_{a}^{b}\left\vert f\left(  \tau\right)  \right\vert ^{p}d_{\alpha,\beta}\tau$
and $\gamma\left(  t\right)
=\left\vert g\left(t\right)\right\vert ^{q}/\int_{a}^{b}\left\vert g\left(\tau\right)
\right\vert ^{q}d_{\alpha,\beta}\tau$.
Since both functions $\alpha$ and $\beta$ are symmetric
$\alpha,\beta$-integrable on $\left[a,b\right]$, then \eqref{eq:hi} holds:
\begin{align*}
\int_{a}^{b}&\frac{\left\vert f\left(  t\right)  \right\vert }{\left(
\int_{a}^{b}\left\vert f\left(  \tau\right)  \right\vert ^{p}d_{\alpha,\beta
}\tau\right)  ^{\frac{1}{p}}}\frac{\left\vert g\left(  t\right)  \right\vert
}{\left(  \int_{a}^{b}\left\vert g\left(  \tau\right)  \right\vert
^{q}d_{\alpha,\beta}\tau\right)  ^{\frac{1}{q}}}d_{\alpha,\beta}t
=\int_{a}^{b}\xi\left(  t\right)  ^{\frac{1}{p}}\gamma\left(  t\right)
^{\frac{1}{q}}d_{\alpha,\beta}t\\
& \leqslant\int_{a}^{b}\left(  \frac{\xi\left(  t\right)  }{p}+\frac
{\gamma\left(  t\right)  }{q}\right)  d_{\alpha,\beta}t\\
& =\frac{1}{p}\int_{a}^{b}\left(  \frac{\left\vert f\left(  t\right)
\right\vert ^{p}}{\int_{a}^{b}\left\vert f\left(  \tau\right)  \right\vert
^{p}d_{\alpha,\beta}\tau}\right)  d_{\alpha,\beta}t
+\frac{1}{q}\int_{a}^{b}\left(  \frac{\left\vert g\left(  t\right)
\right\vert ^{q}}{\int_{a}^{b}\left\vert g\left(  \tau\right)
\right\vert ^{q}d_{\alpha,\beta}\tau}\right)  d_{\alpha,\beta}t = 1.
\end{align*}
\end{proof}

The particular case $p=q=2$ of \eqref{eq:hi}
gives the Cauchy--Schwarz inequality.

\begin{corollary}[Cauchy--Schwarz's inequality]
Let $f,g:\mathbb{R}\rightarrow\mathbb{R}$
and $a,b\in\mathbb{R}$ with $a<b$. If $f$ and $g$
are $\alpha,\beta$-symmetric integrable on $\left[  a,b\right]$, then
\[
\int_{a}^{b}\left\vert f\left(  t\right)  g\left(  t\right)  \right\vert
d_{\alpha,\beta}t\leqslant\sqrt{\left(  \int_{a}^{b}\left\vert f\left(
t\right)  \right\vert ^{2}d_{\alpha,\beta}t\right)  \left(
\int_{a}^{b}\left\vert g\left(  t\right)
\right\vert ^{2}d_{\alpha,\beta}t\right)}\text{.}
\]
\end{corollary}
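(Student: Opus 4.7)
The plan is to derive the Cauchy--Schwarz inequality as an immediate specialization of Theorem~\ref{Holders Inequality} with $p = q = 2$. Since the conjugate condition $q = p/(p-1)$ gives $q = 2$ precisely when $p = 2$, this symmetric choice is admissible.

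First I would invoke Theorem~\ref{Holders Inequality} with $p = 2$ to produce
\[
\int_{a}^{b}\left\vert f(t) g(t)\right\vert d_{\alpha,\beta}t
\leqslant \left(\int_{a}^{b}\left\vert f(t)\right\vert^{2}d_{\alpha,\beta}t\right)^{\frac{1}{2}}
\left(\int_{a}^{b}\left\vert g(t)\right\vert^{2}d_{\alpha,\beta}t\right)^{\frac{1}{2}},
\]
and then combine the two exponent-$1/2$ factors on the right into a single square root, which matches the stated form. No further manipulation is required.

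The only delicate point is matching hypotheses: Theorem~\ref{Holders Inequality} requires $\lvert f\rvert$ and $\lvert g\rvert$ to be $\alpha,\beta$-symmetric integrable on $[a,b]$, while the corollary states the hypothesis in terms of $f$ and $g$. Reading ``$f,g$ $\alpha,\beta$-symmetric integrable'' in the intended absolute sense (as implicitly used throughout Section~\ref{sec:ineq}, where $\lvert\cdot\rvert^p$-integrability is repeatedly deduced from integrability of $\lvert\cdot\rvert$ via Theorem~\ref{modulo}), the corollary's assumptions give exactly what Theorem~\ref{Holders Inequality} needs. I expect no real obstacle here; the entire proof is a one-line instantiation of H\"{o}lder, and the main ``difficulty'' is merely clarifying this hypothesis compatibility.
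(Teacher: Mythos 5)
Your proposal is correct and matches the paper exactly: the paper also obtains the Cauchy--Schwarz inequality as the immediate specialization $p=q=2$ of H\"{o}lder's inequality \eqref{eq:hi}, with no further argument. Your remark on reconciling the hypotheses ($f,g$ versus $\left\vert f\right\vert, \left\vert g\right\vert$ integrable) is a reasonable reading of the paper's implicit conventions and does not change the approach.
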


We prove the Minkowski inequality using H\"{o}lder's inequality.

\begin{theorem}[Minkowski's inequality]
Let $f,g:\mathbb{R}\rightarrow\mathbb{R}$
and $a,b,p\in\mathbb{R}$ with $a<b$ and $p>1$.
If $f$ and $g$ are $\alpha,\beta$-symmetric integrable
on $\left[a,b\right]$, then
\[
\left(  \int_{a}^{b}\left\vert f\left(  t\right)  +g\left(  t\right)
\right\vert ^{p}d_{\alpha,\beta}t\right)  ^{\frac{1}{p}}\leqslant\left(
\int_{a}^{b}\left\vert f\left(  t\right)  \right\vert ^{p}d_{\alpha,\beta
}t\right)  ^{\frac{1}{p}}+\left(  \int_{a}^{b}\left\vert g\left(  t\right)
\right\vert ^{p}d_{\alpha,\beta}t\right)  ^{\frac{1}{p}}.
\]
\end{theorem}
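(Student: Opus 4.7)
The plan is to imitate the classical derivation, in which Minkowski's inequality is extracted from H\"{o}lder's inequality applied to the factorization $|f+g|^{p}=|f+g|\cdot|f+g|^{p-1}$, and to check along the way that all the quantities that appear are $\alpha,\beta$-symmetric integrable on $[a,b]$.

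First I would settle the integrability questions. Since $f$ and $g$ are $\alpha,\beta$-symmetric integrable, so is $f+g$ by item 5 of Theorem~\ref{propriedades}; hence $|f+g|\leqslant|f|+|g|$ together with items 5 and 6 of Theorem~\ref{propriedades} and Theorem~\ref{modulo} show that $|f+g|^{p}$ is $\alpha,\beta$-symmetric integrable on $[a,b]$. In particular, $|f+g|^{p-1}$ is also $\alpha,\beta$-symmetric integrable (apply Theorem~\ref{modulo} with exponent $p-1$ if $p-1>1$, otherwise bound $|f+g|^{p-1}\leqslant 1+|f+g|$). Then item~6 of Theorem~\ref{propriedades} gives integrability of $|f|\,|f+g|^{p-1}$ and $|g|\,|f+g|^{p-1}$.

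Next I would write
\begin{equation*}
|f(t)+g(t)|^{p}\;\leqslant\;|f(t)|\,|f(t)+g(t)|^{p-1}+|g(t)|\,|f(t)+g(t)|^{p-1}
\end{equation*}
for every $t\in\mathcal{A}\cup\mathcal{B}$, and integrate using item~3 of Theorem~\ref{des} and the linearity from Theorem~\ref{propriedades}. H\"{o}lder's inequality (Theorem~\ref{Holders Inequality}) applied with exponents $p$ and $q=p/(p-1)$ to each of the two resulting integrals, together with the identity $(p-1)q=p$, yields
\begin{equation*}
\int_{a}^{b}|f+g|^{p}d_{\alpha,\beta}t
\leqslant\left[\left(\int_{a}^{b}|f|^{p}d_{\alpha,\beta}t\right)^{\!\frac{1}{p}}
+\left(\int_{a}^{b}|g|^{p}d_{\alpha,\beta}t\right)^{\!\frac{1}{p}}\right]\!
\left(\int_{a}^{b}|f+g|^{p}d_{\alpha,\beta}t\right)^{\!\frac{1}{q}}.
\end{equation*}
Dividing both sides by $\bigl(\int_{a}^{b}|f+g|^{p}d_{\alpha,\beta}t\bigr)^{1/q}$ and using $1-\tfrac{1}{q}=\tfrac{1}{p}$ produces the desired inequality. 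The degenerate case $\int_{a}^{b}|f+g|^{p}d_{\alpha,\beta}t=0$ is trivial since the left-hand side then vanishes.

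The main obstacle is really only the bookkeeping of integrability: once $|f+g|^{p-1}$ has been shown to be $\alpha,\beta$-symmetric integrable and the monotonicity result of Theorem~\ref{des} is in place, the algebraic manipulation above is mechanical and mirrors the classical Lebesgue proof.
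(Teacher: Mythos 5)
Your proposal follows essentially the same route as the paper: factor $|f+g|^{p}=|f+g|\,|f+g|^{p-1}$, bound via the triangle inequality, apply H\"{o}lder's inequality with $q=p/(p-1)$ to each term, and divide by $\bigl(\int_{a}^{b}|f+g|^{p}\,d_{\alpha,\beta}t\bigr)^{1/q}$. Your treatment is in fact slightly more careful than the paper's, since you address the integrability bookkeeping and the degenerate case $\int_{a}^{b}|f+g|^{p}\,d_{\alpha,\beta}t=0$, both of which the paper leaves implicit.
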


\begin{proof}
One has
\begin{multline*}
\int_{a}^{b}\left\vert f\left(  t\right)
+g\left(  t\right)  \right\vert^{p}d_{\alpha,\beta}t
=\int_{a}^{b}\left\vert f\left(  t\right)  +g\left(
t\right)  \right\vert ^{p-1}\left\vert f\left(  t\right)  +g\left(  t\right)
\right\vert d_{\alpha,\beta}t\\
\leqslant\int_{a}^{b}\left\vert f\left(  t\right)  \right\vert \left\vert
f\left(  t\right)  +g\left(  t\right)  \right\vert ^{p-1}d_{\alpha,\beta
}t+\int_{a}^{b}\left\vert g\left(  t\right)  \right\vert \left\vert f\left(
t\right)  +g\left(  t\right)  \right\vert ^{p-1}d_{\alpha,\beta}t.
\end{multline*}
Applying H\"{o}lder's inequality (Theorem~\ref{Holders Inequality}) with
$q=p/(p-1)$, we obtain
\begin{align*}
& \int_{a}^{b}\left\vert f\left(  t\right)  +g\left(  t\right)  \right\vert
^{p}d_{\alpha,\beta}t
\leqslant \left(  \int_{a}^{b}\left\vert f\left(
t\right)  \right\vert ^{p}d_{\alpha,\beta}t\right)  ^{\frac{1}{p}}\left(
\int_{a}^{b}\left\vert f\left(  t\right)  +g\left(  t\right)  \right\vert
^{\left(  p-1\right)  q}d_{\alpha,\beta}t\right)  ^{\frac{1}{q}}\\
& +\left(  \int_{a}^{b}\left\vert g\left(  t\right)  \right\vert ^{p}
d_{\alpha,\beta}t\right)  ^{\frac{1}{p}}\left(  \int_{a}^{b}\left\vert
f\left(  t\right)  +g\left(  t\right)  \right\vert ^{\left(  p-1\right)
q}d_{\alpha,\beta}t\right)  ^{\frac{1}{q}}\\
=& \left[  \left(  \int_{a}^{b}\left\vert f\left(  t\right)  \right\vert
^{p}d_{\alpha,\beta}t\right)  ^{\frac{1}{p}}+\left(  \int_{a}^{b}\left\vert
g\left(  t\right)  \right\vert ^{p}d_{\alpha,\beta}t\right)  ^{\frac{1}{p}
}\right]
\left(  \int_{a}^{b}\left\vert f\left(  t\right)  +g\left(
t\right)  \right\vert ^{\left(  p-1\right)  q}d_{\alpha,\beta}t\right)
^{\frac{1}{q}}.
\end{align*}
Therefore,
\begin{equation*}
\frac{\int_{a}^{b}\left\vert f\left(  t\right)
+g\left(  t\right)  \right\vert^{p}d_{\alpha,\beta}t}{\left(
\int_{a}^{b}\left\vert f\left(
t\right)  +g\left(  t\right)  \right\vert ^{\left(  p-1\right)  q}
d_{\alpha,\beta}t\right)^{\frac{1}{q}}}
\leqslant
\left(  \int_{a}^{b}\left\vert f\left(
t\right)  \right\vert ^{p}d_{\alpha,\beta}t\right)^{\frac{1}{p}}
+\left(\int_{a}^{b}\left\vert g\left(  t\right)  \right\vert ^{p}d_{\alpha,\beta
}t\right)  ^{\frac{1}{p}}.
\end{equation*}
\end{proof}

Our $\alpha,\beta$-symmetric calculus is more general
than the standard $h$-calculus. In particular, all our results give,
as corollaries, results in the classical quantum $h$-calculus
by choosing $\alpha=h>0$ and $\beta=0$.


\begin{acknowledgement}
Work supported by FEDER and Portuguese funds,
COMPETE reference FCOMP-01-0124-FEDER-022690,
and CIDMA and FCT, project PEst-C/MAT/UI4106/2011.
Brito da Cruz is also supported by FCT through
the Ph.D. fellowship SFRH/BD/33634/2009.
\end{acknowledgement}



\end{document}